\documentclass[letterpaper, 10 pt, conference]{ieeeconf}
\usepackage{ifpdf}

\usepackage{algorithm}
\usepackage{algpseudocode}
\usepackage{color}
\usepackage{url}
\ifpdf
\usepackage[pdftex]{graphicx}
\else
\usepackage[dvips]{graphicx}
\fi
\usepackage{tabularx}
\DeclareGraphicsExtensions{.eps}
\usepackage[caption=false,font=footnotesize]{subfig}
\usepackage{comment}
\usepackage{bm}
\usepackage{amsmath,amsthm,amsfonts,amssymb}
\usepackage[noadjust]{cite}
\usepackage{nomencl}
\theoremstyle{definition}
\newtheorem{theorem}{\textbf{Theorem}}

\newtheorem{proposition}[theorem]{\textbf{Proposition}}

\newtheorem{corollary}[theorem]{\textbf{Corollary}}

\usepackage{tikz}
\usetikzlibrary{automata,positioning}


\newcommand{\up}[1]{^\mathrm{#1}}
\newcommand\Tstrut{\rule{0pt}{2.6ex}}         
\newcommand\Bstrut{\rule[-0.9ex]{0pt}{0pt}}   


\IEEEoverridecommandlockouts
\begin{document}

\title{A Lagrangian Policy for Optimal Energy Storage Control}

\author{Bolun~Xu,
	Magnus Korp\aa s,
	Audun Botterud,
	Francis O'Sullivan
\thanks{B.~Xu, A.~Botterud, and F.~O'Sullivan are with Massachusetts Institute of Technology, MA, USA. M.~Korp\aa s is with Norwegian University of Science and Technology,  Trondheim, Norway. Contact: \{xubolun, audunb,frankie\}@mit.edu, magnus.korpas@ntnu.no. }
}

\maketitle
\makenomenclature

\begin{abstract}
This paper presents a millisecond-level look-ahead control algorithm for energy storage. The algorithm connects the optimal control with the Lagrangian multiplier associated with the state-of-charge constraint. It is compared to solving look-ahead control using a state-of-the-art convex optimization solver. The paper include discussions on sufficient conditions for including the non-convex simultaneous charging and discharging constraint, and provide upper and lower bounds for the primal and dual results under such conditions. Simulation results show that both methods obtain the same control result, while the proposed algorithm runs up to 100,000 times faster and solves most problems within one millisecond. The theoretical results from developing this algorithm also provide key insights into designing optimal energy storage control schemes at the centralized system level as well as under distributed settings.


\end{abstract}

\begin{keywords}
Energy systems, Numerical algorithm, Predictive control for nonlinear systems
\end{keywords}

\IEEEpeerreviewmaketitle

\section{Introduction}
Energy storage devices such as batteries are key resources in  future energy systems due to their flexibility and fast response speed, and their convenient installations as either large-scale bulk units or as distributed resources.
Most real-time energy storage operations are optimized using predictive control, with applications such as economic dispatch~\cite{korpas2006operation}, frequency control~\cite{li2016connecting}, voltage control~\cite{zhang2015optimal}, renewable integration~\cite{khalid2010model}, energy arbitrage~\cite{krishnamurthy2018energy}, peak shaving~\cite{shi2018using}, electric vehicle charging~\cite{xu2016hierarchical}, or a combination of several aforementioned applications~\cite{megel2014scheduling}. These predictive control strategies solve a multi-period optimization problem over a look-ahead horizon at each control step, obtaining the control and state profile over the entire horizon but only applies the first control result, the problem is then updated with a new horizon and state information for the next control step.


The challenge of using look-ahead control in practice is trading off optimality with computational tractability, as a longer look-ahead horizon incorporates more future information and thus improves solution optimality, but increases the computational challenge significantly. For example, real-time economic dispatches in power systems are typically solved over a single period or with a look-ahead horizon less than one hour~\cite{epri_2016}. However, power system operations have strong daily patterns due to load and weather variations, such as charging storage from solar power during the day and discharge during the night. Thus, being able to incorporate a look-ahead horizon over one day or even longer is crucial for the future power system, but solving such problems over the scale of a realistic power system is extremely computationally challenging~\cite{zhao2018multi}, especially binary variables must be introduced in certain application to prevent simultaneous charging and discharging, making the problem non-convex~\cite{castillo2013profit}. In addition, future uncertainties in power systems are often modeled with scenarios~\cite{wang2011wind}, and modeling uncertainties from multiple sources can easily lead to hundreds of scenarios that makes it almost impossible to solve look-ahead economic dispatch with conventional optimization solvers. While methods such as stochastic dual dynamic programming~\cite{papavasiliou2018application} reduce the solution complexity by introducing inter-temporal and scenario decomposition, the computation is still difficult and requires significant memory usage. On the other hand, the optimal control problem must be solved within a reasonable timescale to fully utilize the fast response speed of energy storage devices. For example, a battery ramps from zero to full discharge power within milliseconds~\cite{gao2002dynamic} thus, a scheme that takes seconds or even minutes to update the control decision is not appropriate for controlling batteries.

Solving storage control from the dual problem is more effective than dealing with the primal problem directly since the storage has only a single state variable with upper and lower bounds. Cruise et a
l.~\cite{cruise2014optimal} concluded the storage control problem can be solved using a search algorithm based on the binding conditions on the state-of-charge, and Hashimi et al~\cite{hashmi2017optimal} has developed an algorithm for energy storage price arbitrage with quadratic time complexity, based on solving the dual problem.
Comparably, the technical contributions of this paper and the main advantages of the proposed algorithm is summarized as follows:
\begin{enumerate}
    \item We show that energy storage control with a generalized time-varying objective functions can be solved in worst-case linear time complexity and constant space complexity, with respect to the look-ahead horizon.
    \item We conclude the optimal control condition for energy storage without having to go through the full look-ahead horizon, i.e., the current control is optimal with respect to any future realizations that may not be included in the current look-ahead window.
    \item We derive a sufficient condition for the occurrence of simultaneous charging and discharging, and provide upper and lower bounds for the prime and dual results under such non-convex conditions.
\end{enumerate}


The rest of this paper is organized as follows: Section~II formulates the problem; Section~III presents main analytical results and the algorithm; Section~IV demonstrates numerical results; and Section~V concludes the paper.

\section{Formulation and Preliminaries}

\subsection{Problem Formulation}
We consider a time period $t\in \mathcal{T}=\{1,\dotsc, T\}$ where $t=1$ is the current control step and $t=2$ to $T$ is the look-ahead horizon.  The optimal control profile $p^*_t$ is a minimizer to the following multi-period optimization problem
\begin{subequations}\label{mso}
\begin{gather}
    p^*_t \in \arg \min_{p_t}\; \sum_{t=1}^T O_t(p_t) + C_T(e_T) \label{mso:obj}\\
    \text{s.t.}\nonumber\\
    p_t = p^+_t - p^-_t \label{mso:c1}\\
    p^+_t = 0 \text{ or } p^-_t = 0 \;\forall t\in \mathcal{T} \label{mso:c0}\\
    e_t - e_{t-1} =   -p_t^+/\eta + p_t^-\eta: \theta_{t-1} \label{mso:c2}
    \end{gather}
    \begin{equation}\label{mso:c3}
    \begin{aligned}
    p^+_t &\geq 0 : \underline{\mu}^+_t\,, \\
    p^-_t &\geq 0 : \underline{\mu}^-_t\,, \\
    e_t &\geq 0 : \underline{\nu}_t\,,
    \end{aligned}
    \quad
    \begin{aligned}
     p^+_t &\leq P : \overline{\mu}^+_t  \\
     p^-_t &\leq P : \overline{\mu}^-_t \\
     e_t &\leq E : \overline{\nu}_t
    \end{aligned}
    \end{equation}
\end{subequations}
where
\begin{enumerate}
    \item $O_t(\cdot)|\mathbb{R}\to\mathbb{R}$ is a scalar time-varying convex objective function. Its derivative is denoted as $o_t = \dot{O}_t$.
    \item $C_T(\cdot)|\mathbb{R}\to\mathbb{R}$ is the terminal cost function of the end state of charge $e_T$. $C_T$ is also convex and its derivative is denoted as $c_T = \dot{C}_T$.  Note that $C_T$ can also be used to model the operation beyond $T$ via dynamic programming~\cite{bertsekas2005dynamic}.
    \item $p_t$ is the control decision variable and it is the energy dispatched from the storage during the time period $t$.
    \item $p^+_t$ is the positive (discharge) component of $p_t$.
    \item $p^-_t$ is the negative (charge) component of $p_t$.
    \item $e_t$ is the state of charge (SoC) at the end of time period $t$, subjects to an initial value of $e_0$.
    \item $\eta \in (0,1]$ is the storage charge and discharge efficiency.
    \item $P \in \mathbb{R}^+$ is the maximum energy that can be charged or discharged into the storage during a single period.
    \item $E \in \mathbb{R}^+$ is the maximum energy that can be stored in the storage.
    \item $p^*_t$ is a set of minimizers to the optimization problem.
    \item $\theta_{t}$ is the Lagrangian multiplier associated with the SoC dynamic, its physical meaning is the marginal value of SoC at the end of time $t$ over the future operation $[t+1, T]$.
    \item $\underline{\mu}^+_t$, $\overline{\mu}^+_t$, $\underline{\mu}^-_t$, $\overline{\mu}^-_t$, $\underline{\nu}_t$, $\overline{\nu}_t$ are positive dual variables associated with inequality constraints. 
\end{enumerate}
The objective function \eqref{mso:obj} minimizes the total operating cost over the period $[1, T]$. Constraint \eqref{mso:c1} divides the control $p_t$ into a positive component $p^+_t$ and a negative component $p^-_t$ in order to model the efficiency difference during charge and discharge in the SoC evolution constraint. \eqref{mso:c0} is the non-simultaneous charging and discharging constraint that enforces the storage to only charge or discharge at any given time point. \eqref{mso:c2} models the SoC evolution subjects to efficiencies. Power and energy ratings are modeled in \eqref{mso:c3}.

\begin{figure*}[!t]
    \centering
    \subfloat[]{
        \includegraphics[trim={5mm 0 10mm 3mm},clip, width = .95\columnwidth]{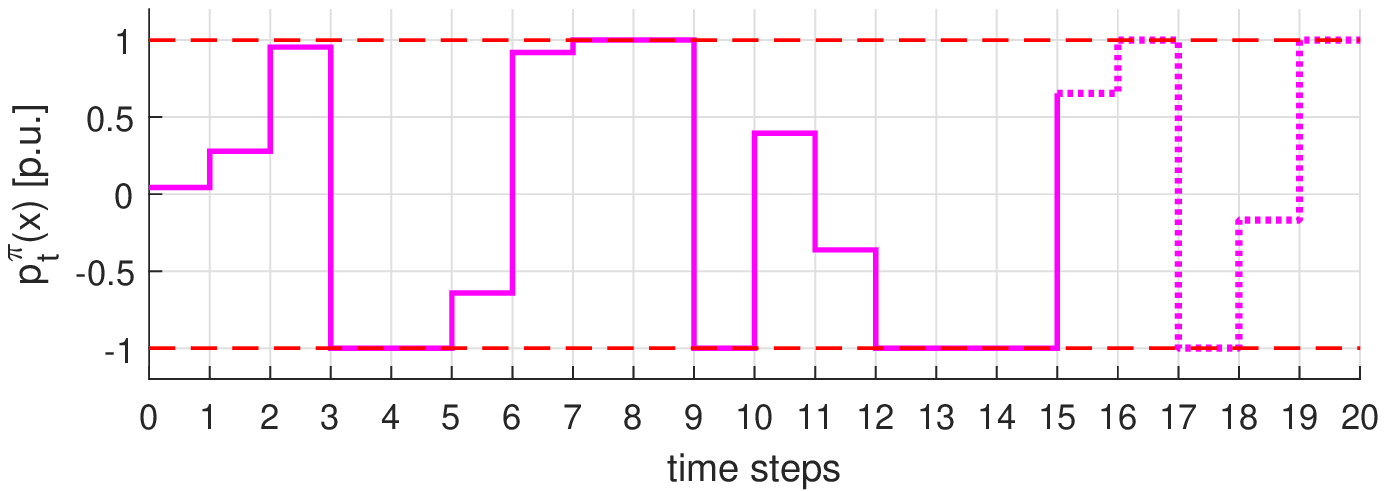}
        \label{fig:th1a}
    }
    \subfloat[]{
        \includegraphics[trim={5mm 0 10mm 3mm},clip, width = .95\columnwidth]{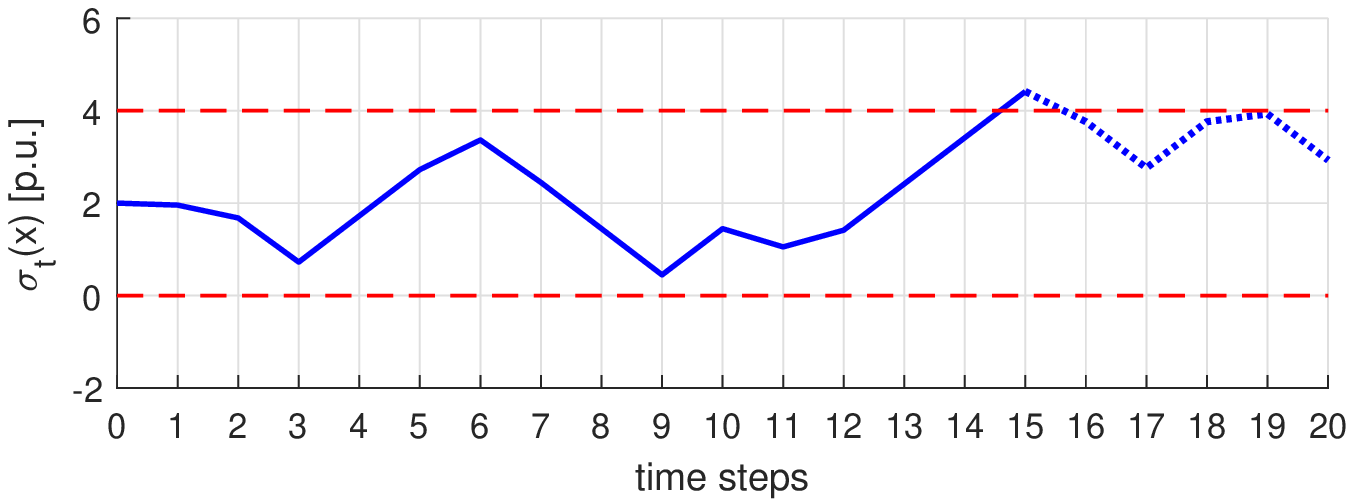}
        \label{fig:th1b}
    }
    \\
    \subfloat[]{
        \includegraphics[trim={5mm 0 10mm 3mm},clip, width = .95\columnwidth]{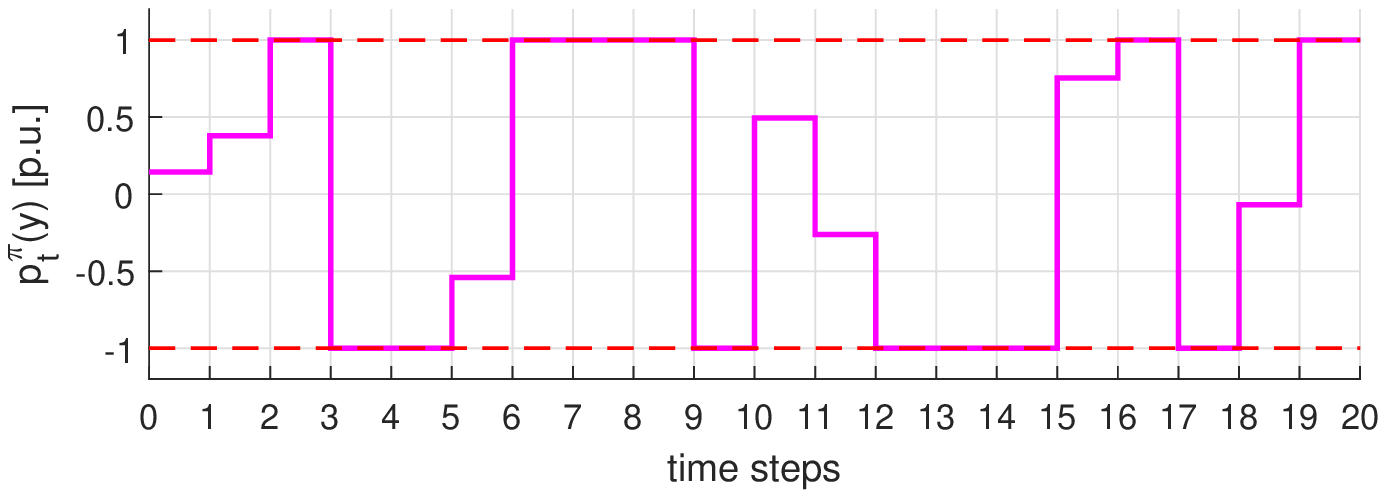}
        \label{fig:th1c}
    }
    \subfloat[]{
        \includegraphics[trim={5mm 0 10mm 3mm},clip, width = .95\columnwidth]{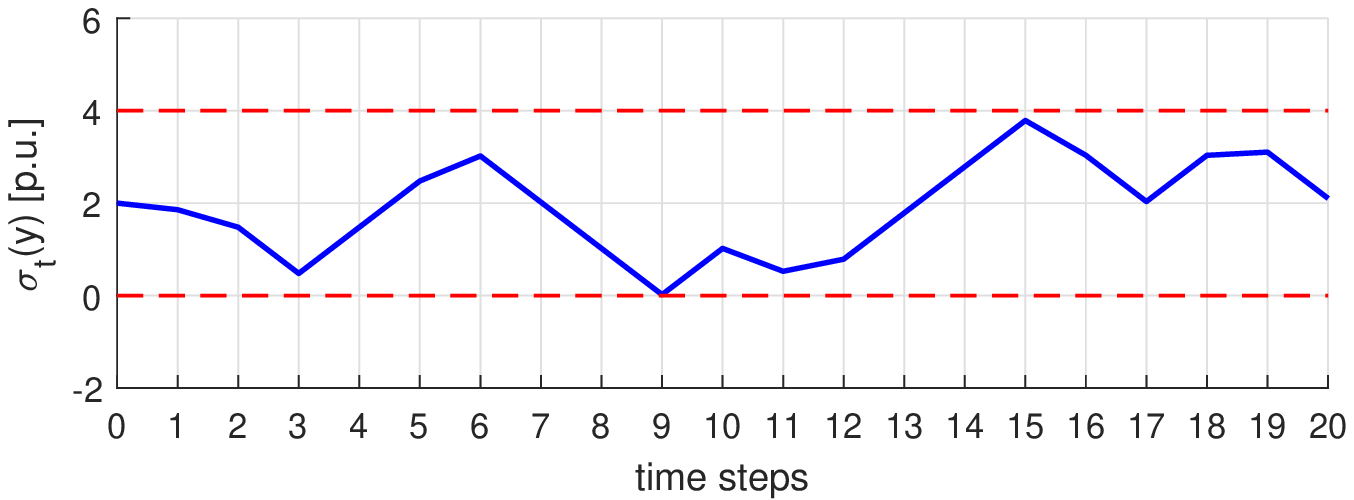}
        \label{fig:th1d}
    }
    \\
    \subfloat[]{
        \includegraphics[trim={5mm 0 10mm 3mm},clip, width = .95\columnwidth]{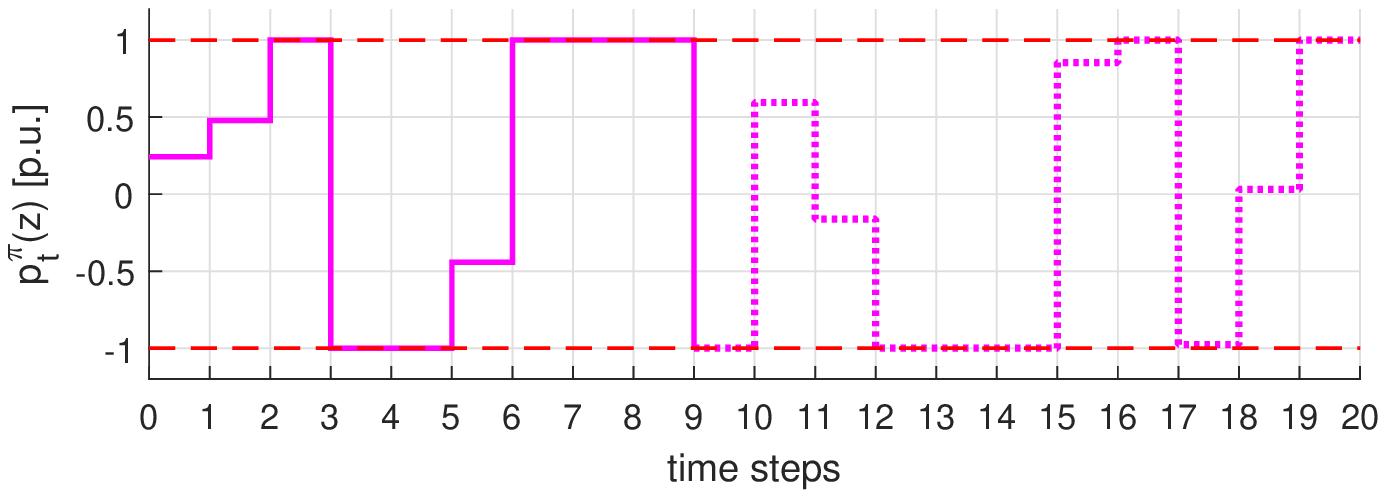}
        \label{fig:th1e}
    }
    \subfloat[]{
        \includegraphics[trim={5mm 0 10mm 3mm},clip, width = .95\columnwidth]{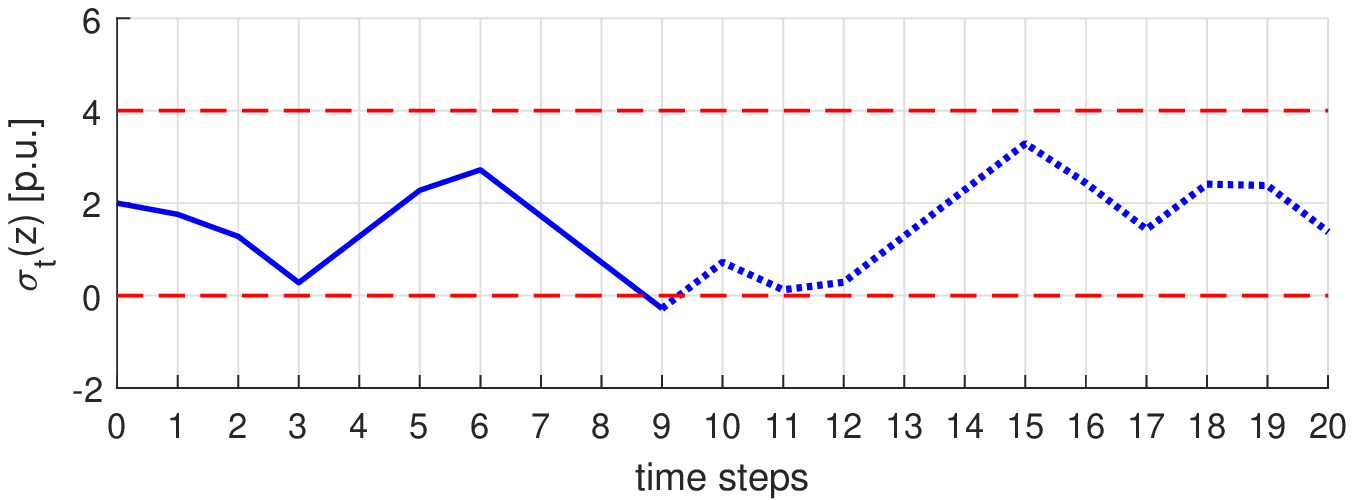}
        \label{fig:th1f}
    }
    \caption{\footnotesize Illustration of conditions listed in Theorem~\ref{th1} with three guesses of the Lagrangian value: $x$, $y$, $z\in\mathbb{R}$, $(x>y>z)$ to the policy $\pi$ and the control simulation results are plotted respectively (a): $p\up{\pi}_t(x)$; (b): $\sigma_t(x)$; (c): $p\up{\pi}_t(y)$; (d): $\sigma_t(y)$; (e): $p\up{\pi}_t(z)$; (f): $\sigma_t(z)$. All trials use the same $O_t$ and storage setting, the power limit is between -1 to 1 and the SoC limit is between 0 to 4, which are plotted with red dashes in the figures. As shown in (a) and (b), $\sigma_t(x)$ reached the upper SoC bound at $t=15$, indicating $x\geq\theta_0$ and the rest of the simulation is plotted in dots indicating it is not required. In (c) and (d),  $\sigma_t(y)$ reached the end $t=20$ without hitting either bound, thus the equality relationship between $y$ and $\theta_0$ can be concluded using $c_T(\sigma_T(y))$. In (e) and (f), $\sigma_t(z)$ hit the lower SoC limit at $t=9$ indicating $z\leq \theta_0$, and the remaining simulation is again plotted with dots.}
    \label{fig:th1}
\end{figure*}

\subsection{Karush-Kuhn-Tucker conditions}

The results in this paper are primarily based on the use of the Karush-Kuhn-Tucker (KKT) conditions~\cite{boyd2004convex}, which are listed below for \eqref{mso} ( the non-simultaneous charging and discharging constraint \eqref{mso:c0} is non-convex and is excluded from the KKT condition below, treatment of this constraint will be discussed later):
\begin{subequations}\label{mfv}
\begin{align}
    o_t(p^*_t) + \overline{\mu}^+_t - \underline{\mu}^+_t + \theta_{t-1}/\eta &= 0 : p^+_t \label{mfv_eq2}\\
    -o_t(p^*_t) + \overline{\mu}^-_t - \underline{\mu}^-_t - \theta_{t-1}\eta &= 0 : p^-_t\label{mfv_eq3}\\
    \theta_{t-1} + \overline{\nu}_t - \underline{\nu}_t - \theta_{t} &= 0 : e_t \label{mfv_eq1}\\
    \theta_{T} + c_T(e^*_T) &= 0 : e_T \label{mfv_eq4}
\end{align}
\end{subequations}
and the complimentary slackness conditions associated with the inequality dual variables:
\begin{equation}\label{csc}
\begin{aligned}
\underline{\mu}^+_t [p^*_t]^+ &= 0 \,, \\
\underline{\mu}^-_t [-p^*_t]^+ &= 0 \,, \\
\underline{\nu}_t e^*_t &= 0\,,
\end{aligned}
\quad
\begin{aligned}
\overline{\mu}^+_t (P-[p^*_t]^+) &= 0 \,, \\
\overline{\mu}^-_t (P-[-p^*_t]^+) &= 0 \,, \\
\overline{\nu}_t (E-e^*_t) &= 0\,,
\end{aligned}
\end{equation}
and $\underline{\mu}^+_t$, $\overline{\mu}^+_t$, $\underline{\mu}^-_t$, $\overline{\mu}^-_t$, $\underline{\nu}_t$, $\overline{\nu}_t \geq 0$.
Note that we replaced the use of $p^+_t$ and $p^-_t$ with $p_t$ since $\partial p^+_t = \partial p_t$, $\partial p^-_t = -\partial p_t$, $p^+_t = [p_t]^+$ and $p^-_t = [-p_t]^+$, where $[x]^+ = \max\{0,x\}$ is the positive value function.

\section{Main Results}


We start by relaxing constraint~\eqref{mso:c0} so that the rest of the problem is convex. Thus we establish a closed-form connection between the primal and dual problem in Proposition~\ref{pro:pi}. We then present Theorem~\ref{th1} on identifying the equality relationship between $\theta_0$ and any real number $x\in\mathbb{R}$ using numerical simulation, and develop a binary search algorithm that finds the dual result $\theta_0$ and , thus, the primal result $p^*_1$. Then we discuss  how we can bound the result when it is necessary to incorporate the non-convex non-simultaneous charging and discharging constraint~\eqref{mso:c0} using the proposed algorithm.


\subsection{Optimal Control Policy}
We define the policy $\pi$ that calculates a storage control decision $p\up{\pi}_t(x)$ for time $t$ from an input $x\in\mathbb{R}$ as
\begin{gather}
    p^+_t(x) = \Big[\varphi_t(-x/\eta)\Big]^P_0,\; p^-_t(x) = \Big[-\varphi_t(-x\eta)\Big]^P_0 \\
    p^{\pi}_t(x) = p^+_t(x) - p^-_t(x)\label{es:policy}
\end{gather}
where $[x]^y_z = \max\{\min\{x,y\},z\}$ saturates $x$ between $y$ and $z$ ($z\leq y$),
and $\varphi_t(x) : \mathbb{R}\to \mathbb{R}$ is the inverse function of $o_t$ (derivative of $O_t$) as
\begin{align}
    \varphi_t(x) &= \sup\{y\in\mathbb{R} | o_t(y) \leq x\}\,.
\end{align}
Note that $\varphi_t$ is an alternative definition of the inverse function to $o_t$ while compatible with a piecewise linear $O_t$. 

The following proposition states that we can obtain the optimal control $p^*_t$ by using the Lagrangian multiplier $\theta_{t-1}$ as input to policy $\pi$:
\begin{proposition}\label{pro:pi}
     Policy $\pi$ is a minimizer to problem \eqref{mso} when using the Lagrangian $\theta_{t-1}$ as the input, i.e.,  $p^*_t = p^{\pi}_t(\theta_{t-1})$\,.
\end{proposition}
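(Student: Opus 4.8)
The plan is to certify optimality of the policy output through the KKT system of the convex relaxation. Dropping the non-convex constraint \eqref{mso:c0} leaves a convex program whose remaining constraints \eqref{mso:c1}--\eqref{mso:c3} are all affine, so \eqref{mfv}, \eqref{csc} together with primal feasibility are necessary and sufficient for optimality, and an optimal primal--dual pair exists (the feasible set is compact and the objective continuous). It therefore suffices to exhibit, for the optimal multipliers $\theta_{t-1}$, primal variables together with nonnegative $\overline{\mu}^{\pm}_t,\underline{\mu}^{\pm}_t,\overline{\nu}_t,\underline{\nu}_t$ satisfying \eqref{mfv}, \eqref{csc}, and primal feasibility. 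I would set $p^+_t=p^+_t(\theta_{t-1})$, $p^-_t=p^-_t(\theta_{t-1})$ as in the policy, read $e_t$ off from \eqref{mso:c2}, and recover $\overline{\nu}_t,\underline{\nu}_t$ from \eqref{mfv_eq1} and \eqref{mfv_eq4}. The core of the verification is most conveniently organized via the partial Lagrangian obtained by dualizing only the SoC dynamics \eqref{mso:c2} with the $\theta$'s: it separates over $t$ into $g_t(p^+_t,p^-_t)=O_t(p^+_t-p^-_t)+\theta_{t-1}(p^+_t/\eta-p^-_t\eta)$ minimized over the box $0\le p^+_t\le P$, $0\le p^-_t\le P$, and the task reduces to showing $(p^+_t(\theta_{t-1}),p^-_t(\theta_{t-1}))$ minimizes $g_t$ over that box.

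The building block is the elementary fact that $[\varphi_t(a)]^P_0\in\arg\min_{0\le y\le P}\{O_t(y)-ay\}$ for every $a\in\mathbb{R}$: the map $y\mapsto O_t(y)-ay$ is convex with (sub)gradient $o_t(y)-a$, its largest unconstrained minimizer is exactly $\varphi_t(a)=\sup\{y:o_t(y)\le a\}$, and the projection of a one-dimensional convex function's minimizer onto an interval is an interval-constrained minimizer. With this in hand I would run the case analysis on $g_t$. Its partials are $\partial_{p^+}g_t=o_t(p^+-p^-)+\theta_{t-1}/\eta$ and $\partial_{p^-}g_t=-o_t(p^+-p^-)-\theta_{t-1}\eta$; an optimum interior in $p^+$ forces $o_t(p^+-p^-)=-\theta_{t-1}/\eta$ and one interior in $p^-$ forces $o_t(p^+-p^-)=-\theta_{t-1}\eta$, and since $\eta\in(0,1]$ gives $1/\eta\ge\eta$ these can hold together only when $\theta_{t-1}=0$ (or $\eta=1$). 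Hence at a minimizer at least one of $p^+_t,p^-_t$ is at a box endpoint; moreover, for $\theta_{t-1}\ge 0$ one checks, again from $1/\eta\ge\eta$, that $p^+_t(\theta_{t-1})$ and $p^-_t(\theta_{t-1})$ are never both positive, so the policy output automatically respects the discarded non-simultaneity constraint \eqref{mso:c0}. In the remaining, essentially one-dimensional, subcases the minimizer of $g_t$ is obtained by applying the building block twice (with $a=-\theta_{t-1}/\eta$ for the discharge term and $a=-\theta_{t-1}\eta$ for the charge term, the latter after reflecting $p^-_t\mapsto-p^-_t$), which yields precisely $p^+_t=[\varphi_t(-\theta_{t-1}/\eta)]^P_0$ and $p^-_t=[-\varphi_t(-\theta_{t-1}\eta)]^P_0$, i.e. $p^{\pi}_t(\theta_{t-1})$; the same conclusion can be read directly off \eqref{mfv_eq2}--\eqref{mfv_eq3} and \eqref{csc}, letting $\overline{\mu}^{\pm}_t,\underline{\mu}^{\pm}_t$ absorb the clipping.

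The step I expect to be the main obstacle is the lack of strict convexity of $O_t$, which the paper explicitly allows (piecewise-linear $O_t$): then $o_t$ is only a nondecreasing step map, $\varphi_t$ is its $\sup$-inverse, and a flat segment of $o_t$ produces a continuum of minimizers of $g_t$, so one must verify that the single point selected by $\varphi_t$ after clipping still (i) satisfies \eqref{mfv_eq2}--\eqref{mfv_eq3} with genuinely nonnegative $\overline{\mu}^{\pm}_t,\underline{\mu}^{\pm}_t$ and the slackness \eqref{csc}, and (ii) induces an SoC trajectory that remains in $[0,E]$ and is complementary-slackness--consistent with the $\overline{\nu}_t,\underline{\nu}_t$ read off from \eqref{mfv_eq1} --- item (ii) does not follow from the per-period argument alone but from the temporal coupling of the $\theta$'s, which jump only when an SoC bound is active. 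Two smaller points need care: the terminal index, where \eqref{mfv_eq4} ties $\theta_T$ to $c_T(e^*_T)$ and must be folded into the effective marginal value consistently, and the sign case $\theta_{t-1}<0$, where the policy may genuinely return $p^+_t>0$ and $p^-_t>0$ together and one should confirm this still matches a minimizer of the relaxed problem. Settling (ii) and these degenerate cases is where the real work lies.
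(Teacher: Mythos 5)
Your proposal is correct and follows essentially the same route as the paper's own proof: substitute the complementary slackness conditions into the stationarity conditions \eqref{mfv_eq2}--\eqref{mfv_eq3} to read off $p^+_t$ and $p^-_t$ as the clipped inverse $[\varphi_t(\cdot)]^P_0$ of $o_t$. Your additional care about KKT sufficiency for the convex relaxation, the flat segments of a piecewise-linear $o_t$, and the $\theta_{t-1}<0$ case goes beyond what the paper writes down (it handles the last point only later, in its Proposition~2), but the core argument is the same.
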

\begin{proof}
We start by rewriting the KKT condition associated with $p^+_t$ as
\begin{subequations}
\begin{align}
o_t(p^*_t)+\theta_{t-1}/\eta &< 0 \text{ only if } p^*_t = P \\
o_t(p^*_t)+\theta_{t-1}/\eta &> 0 \text{ only if } p^*_t = 0 \\
o_t(p^*_t)+\theta_{t-1}/\eta &= 0 \text{ if } 0\leq \varphi_t(-\theta_{t-1}/\eta)\leq P
\end{align}
\end{subequations}
where we substitute the complementary slackness condition into \eqref{mfv_eq2} that replaces $\overline{\mu}^+_t$ and $\underline{\mu}^+_t$. It is now trivial to see that we can calculate $p^*_t$ as $\varphi_t(-\theta_{t-1}/\eta)$ and limiting the result between 0 and $P$, hence
\begin{align}
p^+_t = \max\{0, \min\{P, \varphi_t(-\theta_{t-1}/\eta)\}\}\,.
\end{align}
We repeat the similar process for $p^-_t$ with \eqref{mfv_eq3}, and use $p_t = p^+_t-p^-_t$ which gives us the result in Proposition~\ref{pro:pi}.
\end{proof}

The following corollary supplements that with $\theta_{0}$ we can obtain $p^*_1$ as well as a series of consecutive optimal control decisions by recording the accumulated sum of the control results $\sigma_t$ defined as
\begin{align}\label{eq:th_soc}
    \sigma_t(x) =& \sigma_{t-1}(x) - [p^{\pi}_t(x)]^+/\eta - [p^{\pi}_t(x)]^-\eta\,,
\end{align}
with the initial value $\sigma_0(x) = e_0$, where $[x]^+ = \max\{0,x\}$ is the positive value function, and $[x]^- = \min\{0,x\}$ is the negative value function. $\sigma_t(x)$ emulates the SoC evolution but using the control result $p\up{\pi}_t(x)$ which may not be optimal. Another difference is that $\sigma_t(x)$ is not limited between $[0,E]$, instead, whether any $\sigma_t(x)$ falls above $E$ or below $0$ is an indicator on the optimality of $p\up{\pi}_t(x)$, as defined by the following corollary:
\begin{corollary}\label{cor:pi}
$p^*_t=p\up{\pi}_t(\theta_0)$ if $0 \leq \sigma_{\tau}(\theta_0) \leq E$ $\forall$ $\tau\in[1,t]$.
\end{corollary}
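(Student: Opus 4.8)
The plan is to prove, by induction on $\tau$, the stronger statement that $p^*_\tau = p^{\pi}_\tau(\theta_0)$ and $e^*_\tau = \sigma_\tau(\theta_0)$ for every $\tau\in[1,t]$; the asserted identity is then the case $\tau=t$. Write $\hat p_\tau := p^{\pi}_\tau(\theta_0)$ and $\hat e_\tau := \sigma_\tau(\theta_0)$. The engine is Proposition~\ref{pro:pi}, which gives $p^*_\tau = p^{\pi}_\tau(\theta_{\tau-1})$, so everything reduces to showing $p^{\pi}_\tau(\theta_{\tau-1}) = p^{\pi}_\tau(\theta_0)$. Rearranging the stationarity condition \eqref{mfv_eq1} gives $\theta_\tau-\theta_{\tau-1} = \overline{\nu}_\tau-\underline{\nu}_\tau$, and the complementary slackness conditions \eqref{csc} force $\overline{\nu}_\tau>0$ only if $e^*_\tau=E$ and $\underline{\nu}_\tau>0$ only if $e^*_\tau=0$; hence $\theta_\tau=\theta_{\tau-1}$ whenever $0<e^*_\tau<E$, i.e.\ the multiplier can move only when the SoC trajectory of the true optimum touches a bound.

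Two monotonicity observations organize the induction. First, since $O_\tau$ is convex, $o_\tau$ and hence $\varphi_\tau$ are nondecreasing, and because $x$ enters the policy only through $-x/\eta$ and $-x\eta$, the map $x\mapsto p^{\pi}_\tau(x)$ is nonincreasing. Second, $q\mapsto [q]^+/\eta+[q]^-\eta$ is strictly increasing (its two one-sided slopes, $1/\eta$ and $\eta$, are both positive) and hence injective; since $e^*_\tau-e^*_{\tau-1} = -[p^*_\tau]^+/\eta-[p^*_\tau]^-\eta$ and the same identity holds for $\hat e_\tau$ by \eqref{eq:th_soc}, "$e^*_{\tau-1}=\hat e_{\tau-1}$ and $p^*_\tau=\hat p_\tau$" always propagates to "$e^*_\tau=\hat e_\tau$", so it suffices to track the control. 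For the base case, Proposition~\ref{pro:pi} gives $p^*_1=p^{\pi}_1(\theta_0)=\hat p_1$, whence $e^*_1=\hat e_1\in[0,E]$ by hypothesis. For the inductive step, assume the statement for all $s<\tau\le t$; if moreover every $\hat e_s$, $s<\tau$, lies in the \emph{open} interval $(0,E)$, then the remark above telescopes to $\theta_{\tau-1}=\theta_0$, and Proposition~\ref{pro:pi} together with the SoC recursion close the step.

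The main obstacle is the complementary case, in which some earlier $\hat e_s$ ($s<\tau$) sits exactly on a bound, so $\theta_{\tau-1}$ may have drifted away from $\theta_0$ — downward at each touch of $0$, upward at each touch of $E$. To handle a step $\tau$ immediately after a touch of $0$ I would: (i) carry along a one-sided comparison showing the drift up to $\tau-1$ leaves $\theta_{\tau-1}\le\theta_0$; (ii) note that $\hat e_{\tau-1}=0$ together with the hypothesis $\hat e_\tau=\sigma_\tau(\theta_0)\ge 0$ forces $\hat p_\tau\le 0$, while primal feasibility of $e^*$ forces $p^*_\tau\le 0$; (iii) use monotonicity of $p^{\pi}_\tau$ to get $\hat p_\tau=p^{\pi}_\tau(\theta_0)\le p^{\pi}_\tau(\theta_{\tau-1})=p^*_\tau\le 0$; and (iv) argue via the stationarity equations \eqref{mfv_eq2}--\eqref{mfv_eq3} that the saturation $[\,\cdot\,]^P_0$ in the definition of $p^{\pi}_\tau$ absorbs the drift, so both sides land at the same (possibly saturated) value. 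A touch of $E$ is symmetric. Step (iv) is where I expect the real difficulty: the squeeze can genuinely leave slack in a degenerate instance, because the dual value $\theta_0$ need not be unique precisely when the optimal SoC trajectory touches a bound while a power limit is also active.

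The clean resolution — and, I expect, the intended reading — is to fix $\theta_0$ to be the optimal dual value for which the SoC-bound multipliers vanish at every bound that $\sigma(\theta_0)$ touches inside $[1,t]$, equivalently the value for which $\theta_\tau=\theta_0$ for all $\tau\in[0,t]$; this is also the value returned by the binary-search algorithm developed below. With that choice the drift is identically zero, the obstacle of the previous paragraph disappears, and the proof collapses to the telescoping argument of the inductive step. I would therefore adopt this characterization of $\theta_0$ at the outset and present only the interior/telescoping argument, relegating the degenerate-dual discussion to a remark.
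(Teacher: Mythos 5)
Your core argument is exactly the paper's proof: rearrange the stationarity condition \eqref{mfv_eq1} to get $\theta_\tau-\theta_{\tau-1}=\overline{\nu}_\tau-\underline{\nu}_\tau$, observe via complementary slackness that both multipliers vanish while the SoC stays within its bounds so that $\theta_{\tau-1}=\theta_0$ telescopes, and then invoke Proposition~\ref{pro:pi}. The paper compresses this into two sentences, does not make explicit the induction identifying $e^*_\tau$ with $\sigma_\tau(\theta_0)$, and silently passes over the degenerate case you flag where $\sigma_\tau(\theta_0)$ sits exactly on a bound (it asserts the multipliers are zero whenever $0\leq e^*_t\leq E$, which strictly requires the open interval); your point that the claim is only unambiguous for the particular optimal dual value whose bound multipliers vanish on $[1,t]$ is a legitimate refinement of what the paper leaves implicit, so adopting that reading and presenting the telescoping argument is the right call.
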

Corollary~\ref{cor:pi} means that we can maintain optimal control by using $\theta_0$ as the input to \eqref{es:policy} for control steps beyond $t=1$ if all previous $\sigma_t(\theta_0)$ are within the SoC constraint. Corollary~\ref{cor:pi} is based on Proposition~\ref{pro:pi} and the KKT condition associated with $e_t$ in \eqref{mfv_eq1} that the $\theta_t$ value will not change if both $\overline{\nu}_t$ and $\underline{\nu}_t$ are zeros, indicating $0\leq e^*_t \leq E$ and $\sigma_t(x) = e^*_t$. This corollary is thus proved.

\subsection{Main Theorem on Finding Lagrangian Dual}

\begin{theorem}\label{th1}
Given $x\in\mathbb{R}$, its equality relationship with respect to $\theta_0$ can be determined as
\begin{enumerate}
    \item  \textbf{If} a) $\sigma_t(x)$ reaches upper bound first, i.e.,  $\exists \tau\in\mathcal{T} \text{ s.t. } \sigma_{\tau}(x) > E \text{ and }
            0\leq\sigma_{\gamma}(x) \leq E$ $\forall \gamma \in [1,\tau)$;
     \textbf{or} b) $\sigma_t(x)$ reached neither bound and $x > -c_T(\sigma_T(x))$;
    \textbf{then} $x\geq \theta_0$;
    \item  \textbf{If} a) $\sigma_t(x)$ reaches lower bound first, i.e.,  $\exists \tau\in\mathcal{T} \text{ s.t. } \sigma_{\tau}(x) < 0 \text{ and }
        0\leq\sigma_{\gamma}(x) \leq E$ $\forall \gamma \in [1,\tau)$;
        \textbf{or} b) $\sigma_t(x)$ reached neither bound and $x < -c_T(\sigma_T(x))$;
    \textbf{then} $x\leq \theta_0$;
    \item \textbf{If} $\sigma_t(x)$ reached neither bound and $x = -c_T(\sigma_T(x))$, \textbf{then} $x = \theta_0$.
\end{enumerate}
\end{theorem}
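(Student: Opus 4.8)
The plan is to build everything on a single monotonicity fact, dispatch item~3 by writing down an explicit KKT point, and handle items~1 and~2 by comparing the simulated trajectory $\sigma_t(x)$ against the true optimal trajectory $e^*_t$.

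\emph{Monotonicity backbone.} Convexity of $O_t$ makes $o_t$ nondecreasing, hence $\varphi_t$ nondecreasing; therefore $p^+_t(x)=[\varphi_t(-x/\eta)]^P_0$ is nonincreasing in $x$ and $p^-_t(x)=[-\varphi_t(-x\eta)]^P_0$ is nondecreasing, so $p^{\pi}_t(x)$ is continuous and nonincreasing in $x$. The increment $p\mapsto -[p]^+/\eta-[p]^-\eta$ in \eqref{eq:th_soc} is continuous and strictly decreasing, so each step of that recursion is continuous and nondecreasing in $x$; by induction $\sigma_t(x)$ is continuous and nondecreasing in $x$ for every fixed $t$, and $-c_T(\sigma_T(x))$ is nonincreasing in $x$ (convexity of $C_T$). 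I would also record up front that the three items partition $\mathbb{R}$: comparing the first time $\sigma_t(x)$ exceeds $E$ with the first time it falls below $0$ puts each $x$ into item~1a, item~2a, or the ``neither bound'' regime, which the sign of $x+c_T(\sigma_T(x))$ then splits into 1b, 2b, 3.

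\emph{Item~3 and the ``neither bound'' branches.} Given $\sigma_t(x)\in[0,E]$ for all $t$ and $x=-c_T(\sigma_T(x))$, I would exhibit the tuple $\hat p_t=p^{\pi}_t(x)$, $\hat e_t=\sigma_t(x)$, $\hat\theta_t\equiv x$ for $t=0,\dots,T$, all $\hat\nu$'s zero, and the $\hat\mu$'s chosen exactly as in the proof of Proposition~\ref{pro:pi}. Primal feasibility is immediate: constraint \eqref{mso:c2} holds because $p^+=[p^{\pi}]^+$ and $p^-=-[p^{\pi}]^-$, the power box constraints hold by the saturation in \eqref{es:policy}, and $\hat e_t\in[0,E]$ by hypothesis. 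Among the KKT conditions, \eqref{mfv_eq1} reduces to $x-x=0$, \eqref{mfv_eq4} is exactly the hypothesis, and \eqref{mfv_eq2}, \eqref{mfv_eq3} together with \eqref{csc} hold by the Proposition~\ref{pro:pi} computation; since \eqref{mso} with \eqref{mso:c0} relaxed is convex, this KKT point is primal/dual optimal, so $x=\theta_0$. Branches 1b and 2b then follow quickly: on the set of $x$ keeping $\sigma(x)$ in $[0,E]$, the map $x\mapsto x+c_T(\sigma_T(x))$ is strictly increasing (backbone) and vanishes at $\theta_0$ (by Corollary~\ref{cor:pi} and \eqref{mfv_eq4} if $\theta_0$ lies in this set, and by monotonicity of $\sigma_T(\cdot)$ otherwise), so its sign fixes the sign of $x-\theta_0$.

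\emph{Items~1a and~2a, and the main obstacle.} I would argue item~1a by contradiction (2a being symmetric): suppose $x<\theta_0$ while $\sigma_\tau(x)>E$ and $\sigma_\gamma(x)\in[0,E]$ for $\gamma<\tau$. Let $s$ be the first time the optimal state $e^*$ touches $\{0,E\}$. For $t\le s$ the multiplier is constant, $\theta_{t-1}=\theta_0$ (by \eqref{mfv_eq1} and \eqref{csc}), so Proposition~\ref{pro:pi} gives $p^*_t=p^{\pi}_t(\theta_0)$ and hence $\sigma_t(\theta_0)=e^*_t\le E$; by the backbone $\sigma_t(x)\le\sigma_t(\theta_0)\le E$ for $t\le s$, so $\tau>s$. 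Continuing past $s$, one would work interval by interval between consecutive bound-contacts of $e^*$, using that $\theta$ can only rise at an upper contact and fall at a lower contact and that the gap $\sigma_t(x)-e^*_t$ is nondecreasing precisely when $x\ge\theta_{t-1}$, to show that a guess strictly below $\theta_0$ gets pushed to exactly $0$ at $e^*$'s next lower contact and strictly below $0$ thereafter, before it can ever cross $E$ --- contradicting $\sigma_\gamma(x)\ge0$ for $\gamma<\tau$. I expect this bookkeeping of the reflected multiplier past the first bound-contact (controlling the accumulated gap $\sigma_t(x)-e^*_t$) to be the hard part. A cleaner route I would try instead: whenever $\sigma_t(x)\in[0,E]$ on $[1,\tau]$, the controls $(p^{\pi}_t(x))_{t\le\tau}$ solve the truncated problem on $[1,\tau]$ with linear terminal cost $-x\,e_\tau$, so $x$ is exactly the marginal value of initial charge for that truncation; comparing its value function with the true $C_T$-augmented cost-to-go and using convexity of the value function in $e_0$ would fix the sign of $x-\theta_0$ at once, with most of the work going into making that comparison rigorous.
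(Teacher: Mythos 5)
Your skeleton is the same as the paper's: establish that $\sigma_t(\cdot)$ is monotone nondecreasing in the multiplier guess, compare the simulated trajectory against the optimal one $e^*_t$, and split cases on which bound is touched first. Your monotonicity backbone is exactly the engine the paper uses, and your treatment of item~3 (exhibiting the explicit primal--dual point with $\hat\theta_t\equiv x$, all $\hat\nu$'s zero, and the $\hat\mu$'s from Proposition~\ref{pro:pi}, then invoking sufficiency of KKT for the relaxed convex problem) is sound and in fact more careful than the paper's one-line version.

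The gap is that you never actually prove items 1a/2a, which are the crux: you set up the contradiction, correctly push the comparison $\sigma_t(x)\le e^*_t$ up to the first bound contact of $e^*$, and then stop, declaring the "bookkeeping of the reflected multiplier past the first bound-contact" to be the hard part and pivoting to a second, also unexecuted, value-function route. The paper's resolution is lighter than the multi-contact tracking you anticipate: you never need to follow the gap $\sigma_t(x)-e^*_t$ past the \emph{first lower-bound contact} of $e^*$. Concretely, take $x\le\theta_0$ and let $\gamma$ be the first time $e^*$ touches the lower bound (or $\gamma=T$ if it never does). On $[1,\gamma)$ only upper contacts can occur, so by \eqref{eq:theta} the multiplier satisfies $\theta_{t-1}\ge\theta_0\ge x$ throughout that window; hence $p^{\pi}_t(x)\ge p^*_t$ and $\sigma_t(x)\le e^*_t\le E$ for all $t\le\gamma$, so $\sigma(x)$ cannot cross $E$ before $\gamma$. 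It then either falls to or below $0$ by time $\gamma$ (since $e^*_\gamma=0$), triggering condition 2a, or reaches $T$ with $\sigma_T(x)\le e^*_T$ and $x\le\theta_T=-c_T(e^*_T)\le -c_T(\sigma_T(x))$, triggering condition 2b. Because 2a/2b exclude 1a/1b, the contrapositive delivers item~1, and the mirror argument delivers item~2. That single pass up to the first lower contact of $e^*$, combined with exclusivity of the cases, is the missing piece; as written, your proposal announces the needed claim twice (once per route) without establishing it either time.
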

Proof of this theorem is deferred to Appendix. The intuition is that the Lagrangian dual is the price of the stored energy, its value does not change despite the change with the SoC evolution, except reaching either the upper or lower SoC bound. The SoC series driven by the optimal dual value should never exceed the SoC bounds as the dual value itself reflects the constrained storage capacity. If the SoC exceed the upper SoC bound, it means SoC value is over estimated as the storage does not have enough capacity to store the excessive energy, hence we picked an $x$ that is higher than the optimal Lagrangian dual value. Vice versa, if the SoC exceed the lower bound, meaning we under estimated the dual value.

\subsection{Solution Algorithm}\label{app:alg}
We design a binary search algorithm that finds $\theta_0$ according to Theorem~\ref{th1}, thus  we find $p^*_1=p\up{\pi}_1(\theta_0)$ (Proposition~\ref{pro:pi}) as well as some consecutive optimal control actions (Corollary~\ref{cor:pi}) without needing to explicitly solve Problem~\eqref{mso}. The algorithm requires a preset search accuracy $\epsilon$ and is described as follows:
\begin{enumerate}
    \item Initialize a search range $L$ and $R$ with which we are confident that $L \leq \theta_0 \leq R$; 
    \item Set $x$ to $(L+R)/2$. If $R-L < \epsilon$, return $x$ as the optimal Lagrangian dual value and $p\up{\pi}_t(x)$ as the optimal storage control up to time step $t$;
    \item Run the following iterative simulation
    \begin{enumerate}
        \item Set $1\to t$ and $e_0\to \sigma_0(x)$;
        \item Calculate $p\up{\pi}_t(x)$ using Eq.~\eqref{es:policy};
        \item Calculate $\sigma_t(x)$ using Eq.~\eqref{eq:th_soc};
        \item If $\sigma_t(x) \geq E$, set $x \to R$, go to Step 2);
        \item If $\sigma_t(x) \leq 0$, set $x \to L$, go to Step 2);
        \item If $t<T$, set $t+1 \to t$ and go to Step b);
        \item If $x \geq -c_T(\sigma_T(x))$, set $x \to R$;
        \item If $x < -c_T(\sigma_T(x))$, set $x \to L$;
    \end{enumerate}
    \item Go to Step 2).
\end{enumerate}
An example of a confident search range is that we can assume stored energy always has a positive value and choose $L = 0$ and $R = \max\{o_t(p)/\eta|t\in\mathcal{T}, p\in [-P, P]\}$.

This algorithm achieves the following complexity results:
\begin{enumerate}
    \item \emph{Constant space complexity:} The algorithm achieves $O(1)$ space complexity with respect to the search range and the look-ahead duration $T$, because the equality relationship between $x$ and $\theta_0$ can be identified using only the current simulation result $\sigma_t(x)$ so that previous simulation results are not required to be stored.
    \item \emph{Worst-case linear run-time complexity:} The algorithm achieves a worst-case $O(n)$ complexity with respect to the look-ahead horizon $T$ since the worst-case scenario is to simulate all operations steps from $t=1$ to $T$ during each search, but may terminate before reaching $T$ as stated in step 3-d and 3-e. It also achieves $O(\log n)$ time complexity with respect to the search range for using a binary search algorithm.
\end{enumerate}


\subsection{Non-simultaneous Charging and Discharging}

By far we have concluded the optimal storage control when relaxing the non-simultaneous charging and discharging constraint~\eqref{mso:c1}. A sufficient condition for relaxing this constraint without sacrificing result optimality is illustrated in the following proposition:
\begin{proposition}
     A sufficient condition for simultaneous charging and discharging to happen is the Lagrangian dual being negative, i.e., if $\theta_{t-1} \geq 0$ then $p^+_tp^-_t = 0$.
\end{proposition}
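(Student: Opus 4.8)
The plan is to read off the structure of $p^+_t$ and $p^-_t$ directly from the closed-form policy of Proposition~\ref{pro:pi} rather than wrestling with the KKT system \eqref{mfv} in full. Since the relaxed problem (dropping \eqref{mso:c0}) is convex, Proposition~\ref{pro:pi} gives that an optimizer satisfies $p^+_t = [\varphi_t(-\theta_{t-1}/\eta)]^P_0$ and $p^-_t = [-\varphi_t(-\theta_{t-1}\eta)]^P_0$. Because $P>0$, the saturation operator yields the equivalences $p^+_t>0 \iff \varphi_t(-\theta_{t-1}/\eta)>0$ and $p^-_t>0 \iff \varphi_t(-\theta_{t-1}\eta)<0$. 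Hence, to prove $p^+_tp^-_t=0$ it is enough to rule out the simultaneous occurrence of $\varphi_t(-\theta_{t-1}/\eta)>0$ and $\varphi_t(-\theta_{t-1}\eta)<0$.

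The second ingredient is monotonicity together with the hypothesis $\theta_{t-1}\ge 0$. Convexity of $O_t$ makes $o_t=\dot O_t$ nondecreasing, so its generalized inverse $\varphi_t(x)=\sup\{y\in\mathbb{R}\mid o_t(y)\le x\}$ is also nondecreasing in $x$. Since $\eta\in(0,1]$ and $\theta_{t-1}\ge 0$ we have $\theta_{t-1}/\eta \ge \theta_{t-1} \ge \theta_{t-1}\eta \ge 0$, hence $-\theta_{t-1}/\eta \le -\theta_{t-1}\eta$, and therefore $\varphi_t(-\theta_{t-1}/\eta)\le \varphi_t(-\theta_{t-1}\eta)$. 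This single inequality is incompatible with $\varphi_t(-\theta_{t-1}/\eta)>0$ holding while $\varphi_t(-\theta_{t-1}\eta)<0$: if the first were true the second quantity would be at least as large and thus also positive. So at most one of $p^+_t,p^-_t$ is strictly positive; as both are nonnegative, $p^+_tp^-_t=0$, which is the claim (equivalently, reading the contrapositive, $\theta_{t-1}<0$ is necessary for simultaneous charging and discharging to occur).

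I expect the only point requiring care to be the bookkeeping in the first paragraph: confirming that the $\sup$-based $\varphi_t$ is genuinely nondecreasing and that the strict-sign characterizations of $p^+_t>0$ and $p^-_t>0$ survive the cases where $o_t$ is piecewise linear, so that $\varphi_t$ may equal $\pm\infty$ or be constant on intervals. These are routine once one observes that taking $\sup$ over the nested superlevel sets of $o_t$ is monotone and that saturating into $[0,P]$ with $P>0$ does not alter the sign at $0$. As a cross-check one can rederive the conclusion from stationarity \eqref{mfv_eq2}--\eqref{mfv_eq3}: in the interior case $0<p^+_t,p^-_t<P$ all power-bound multipliers vanish, forcing $o_t(p_t)=-\theta_{t-1}/\eta=-\theta_{t-1}\eta$ and hence $\theta_{t-1}=0$ when $\eta<1$, while the cases with an active power bound are settled by the sign of the corresponding $\overline{\mu}^+_t,\underline{\mu}^+_t,\overline{\mu}^-_t,\underline{\mu}^-_t$ together with $\theta_{t-1}\ge 0$; the policy-based argument subsumes all of these, so that is the version I would write out.
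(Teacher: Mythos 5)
Your proposal is correct and follows essentially the same route as the paper: both arguments read the signs of $p^+_t$ and $p^-_t$ off the closed-form policy \eqref{es:policy}, observe that simultaneous charging and discharging forces $\varphi_t(-\theta_{t-1}\eta)<0<\varphi_t(-\theta_{t-1}/\eta)$, and use monotonicity of $\varphi_t$ together with $\eta\in(0,1]$ to conclude $\theta_{t-1}<0$. You simply phrase it as the contrapositive (assuming $\theta_{t-1}\ge 0$ and ruling out the sign pattern) while the paper derives the necessary condition directly; the content is identical.
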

\begin{proof}
From \eqref{es:policy}, it is trivial to see that for simultaneous charging and discharge to happen, both terms in \eqref{es:policy} must take non-zero values, hence
\begin{gather}
    \varphi_t(-x\eta) < 0 < \varphi_t(-x/\eta)
\end{gather}
and since $\varphi_t$ is a monotonic increasing function and $0 < \eta \leq 1$, it follows
\begin{align}
    -x\eta < -x/\eta
\end{align}
hence $x$ must be less than zero.
\end{proof}
Recall that $\theta_t$ is the value of the stored energy, hence $\theta_t$ being negative indicates the stored energy has a negative value, i.e., we have an intention to store as less energy as possible. Thus, when \eqref{mso:c1}, the storage can charging and discharging at the same time and use round-trip efficiency loss to consume excessive energy even when the storage is full and has no more storage space. This intuition is useful when deciding whether simultaneous charging and discharging should be considered when formulating the problem, for example, this constraint should be considered in price arbitrage for markets with frequent negative prices.

A common method for enforcing non-simultaneous charging and discharging is to add auxiliary binary variables $\bm{v}=\{v_t\}$ such that the storage can only charge or discharge at one time, as
\begin{align}\label{mso:c0b}
    p^+_t <= Pv_t,\; p^-_t <= P(1-v_t),\; v_t \in \{0,1\}
\end{align}
making \eqref{mso} a mixed-integer programming problem, the Lagrangian dual can thus be calculated given a set of fixed $\bm{v}$.

We assume set $\mathcal{V}$ as the set of all reasonable charging status results, in which $v_t$ may be either 0 or 1 during all periods when simultaneous charging and discharge occur, i.e., when \eqref{mso:c0} must be enforced. It is worth noting that the optimal result for $v\bm{v}$ must be in $\mathcal{V}$. We denote $\theta_0(\bm{v})$ as the resulting Lagrangian dual associated with the charging status $\bm{v}$, then the following proposition stands:
\begin{proposition}
\label{pro:dual_b}
     For all $\bm{v}\in\mathcal{V}$, $\underline{\theta}_0 \leq \theta_0(\bm{v})\leq \overline{\theta}_0$, where $\underline{\theta}_0$ is calculated using Algorithm 1 by replacing \eqref{es:policy} with
     \begin{subequations}
     \begin{align}
             p^{\pi-}_t(x) = p^+_t(x)\mathbf{1}_{[p^-_t(x) == 0]} - p^-_t(x)\label{es:policy2}
     \end{align}
     and $\underline{\theta}_0$ is calculated using Algorithm 1 by replacing \eqref{es:policy} with
     \begin{align}
             p^{\pi+}_t(x) = p^+_t(x) - p^-_t(x)\mathbf{1}_{[p^+_t(x) == 0]}\label{es:policy3}
     \end{align}
     \end{subequations}
     where $\mathbf{1}_{[x]} = \{\text{ 0 if $x$ is true and 1 otherwise} \}$ is the indicator function.
\end{proposition}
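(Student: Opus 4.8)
The plan is to reduce the mixed-integer problem to the family of convex subproblems obtained by fixing the charging pattern $\bm v\in\mathcal V$, to characterize the dual $\theta_0(\bm v)$ of each subproblem through a feasible-side policy that lies pointwise between the two modified policies \eqref{es:policy2} and \eqref{es:policy3}, and then to use a monotonicity argument that turns a pointwise ordering of policies into the claimed ordering of the Lagrangians they produce via Algorithm~1. First I would fix $\bm v\in\mathcal V$ and note that \eqref{mso} with \eqref{mso:c0} replaced by \eqref{mso:c0b} at this $\bm v$ is convex and carries the KKT structure of Section~II: a period with $v_t=1$ pins $p^-_t=0$, so \eqref{mfv_eq2} still gives $p^*_t=[\varphi_t(-\theta_{t-1}/\eta)]^P_0=p^+_t(\theta_{t-1})$; a period with $v_t=0$ symmetrically gives $p^*_t=-p^-_t(\theta_{t-1})$; and rows \eqref{mfv_eq1}, \eqref{mfv_eq4} are unchanged. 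Hence, by the same arguments that prove Proposition~\ref{pro:pi}, Corollary~\ref{cor:pi}, and Theorem~\ref{th1}, running Algorithm~1 with the policy $p^{\bm v}_t$ --- equal to $p^+_t(x)$ when $v_t=1$ and to $-p^-_t(x)$ when $v_t=0$ --- returns $\theta_0(\bm v)$.

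Second I would establish the pointwise sandwich $p^{\pi-}_t(x)\le p^{\bm v}_t(x)\le p^{\pi+}_t(x)$ for every $x\in\mathbb R$ and $t\in\mathcal T$. Recall that $p^+_t(\cdot)$ is nonincreasing, $p^-_t(\cdot)$ nondecreasing, and that simultaneous charging/discharging --- $p^+_t(x)>0$ and $p^-_t(x)>0$ --- requires $x<0$ by the preceding proposition, so the classification of periods into simultaneous and non-simultaneous is stable. In a non-simultaneous period, a reasonable $\bm v$ does not forbid the one active direction and all three policies return $p^+_t(x)-p^-_t(x)$; in a simultaneous period, $p^{\pi-}_t(x)=-p^-_t(x)$ and $p^{\pi+}_t(x)=p^+_t(x)$ are the two admissible single-direction branches, and $p^{\bm v}_t(x)$ is one of them, which gives the sandwich.

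Third I would prove the monotonicity principle: if two policies satisfy $p^A_t(x)\le p^B_t(x)$ for all $t$ and $x$, then the output $\theta_0^A$ of Algorithm~1 under $A$ satisfies $\theta_0^A\le\theta_0^B$. Because $p\mapsto-[p]^+/\eta-[p]^-\eta$ is nonincreasing, \eqref{eq:th_soc} propagates the policy ordering into $\sigma^A_t(x)\ge\sigma^B_t(x)$ for all $t$ and $x$ by induction. Suppose $\theta_0^A>\theta_0^B$ and pick $x$ strictly between them. Since $x>\theta_0^B$ and the three scenarios of Theorem~\ref{th1} are mutually exclusive and exhaustive, $\sigma^B(x)$ either reaches the upper SoC bound first or stays in $[0,E]$ with $x>-c_T(\sigma^B_T(x))$. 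In the first case $\sigma^A(x)$, being at least $\sigma^B(x)\ge 0$ up to that step and strictly above $E$ at it, also reaches the upper bound first; in the second case, either $\sigma^A(x)$ reaches the upper bound first, or it likewise stays in $[0,E]$ and then $\sigma^A_T(x)\ge\sigma^B_T(x)$ together with $c_T=\dot C_T$ nondecreasing gives $x>-c_T(\sigma^A_T(x))$. Either way Theorem~\ref{th1} forces $x\ge\theta_0^A$, contradicting $x<\theta_0^A$. Applying this principle with $(A,B)=(p^{\pi-},p^{\bm v})$ and then with $(A,B)=(p^{\bm v},p^{\pi+})$ yields $\underline\theta_0\le\theta_0(\bm v)\le\overline\theta_0$ (reading \eqref{es:policy3} in the statement as defining $\overline\theta_0$), for every $\bm v\in\mathcal V$.

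The main obstacle is the monotonicity principle --- carrying the ``which SoC bound is reached first'' classification of Theorem~\ref{th1} across the two simulated trajectories without circularity --- and, alongside it, the bookkeeping that keeps the policy sandwich valid at every $x$ actually visited by the two runs of Algorithm~1. This requires delineating $\mathcal V$ precisely enough, and possibly choosing the search bounds $L,R$ compatibly with the sign of $\theta_0$, so that a period which is simultaneous at the relaxed optimum stays simultaneous throughout the search, and the per-period comparison in Step~2 never flips.
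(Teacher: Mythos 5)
Your argument has the same skeleton as the paper's proof of Proposition~\ref{pro:dual_b} --- compare the simulated SoC trajectories of the modified policies against that of any fixed charging pattern $\bm{v}\in\mathcal{V}$, then convert the SoC ordering into an ordering of the duals via Theorem~\ref{th1} --- but your version is genuinely more developed and, on one point, is the corrected version of what the paper actually writes. You add two pieces the paper omits: (i) the reduction showing that for fixed $\bm{v}$ the subproblem is convex with the same KKT structure, so that Algorithm~1 run with the policy $p^{\bm{v}}_t$ indeed returns $\theta_0(\bm{v})$ (the paper silently assumes this), and (ii) an explicit monotonicity lemma --- pointwise-ordered policies give ordered SoC trajectories at every $x$, hence ordered algorithm outputs --- proved by contradiction through the exhaustive trichotomy of Theorem~\ref{th1}, where the paper compresses everything into a single ``thus according to Theorem~\ref{th1}.'' Most importantly, your direction is the consistent one: since $p\mapsto -[p]^+/\eta-[p]^-\eta$ is nonincreasing, the charge-preferring policy \eqref{es:policy2} yields a \emph{higher} SoC trajectory than any $\bm{v}\in\mathcal{V}$ and therefore the \emph{lower} dual bound $\underline{\theta}_0$, matching the proposition's labels; the paper's proof asserts the opposite SoC ordering (``the resulting SoC must always be lower \dots the resulting dual must be no smaller''), which contradicts both the SoC dynamics \eqref{eq:th_soc} and the statement being proved. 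You also correctly read the statement's second $\underline{\theta}_0$ as a typo for $\overline{\theta}_0$, and correctly treat the printed definition of the indicator function as inverted. The caveats you flag at the end --- the imprecise delineation of $\mathcal{V}$ and the $x$-dependence of which periods are simultaneous --- are real, but they do not break your sandwich argument (in a non-simultaneous period all three policies coincide for a reasonable $\bm{v}$), and the paper's own proof does not address them at all.
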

\begin{proof}
First note that compared to \eqref{es:policy}, \eqref{es:policy2} enforces the storage to charge whenever charging and discharging components are both non-zero. Thus when using \eqref{es:policy2} to simulate the battery operation in Algorithm 1, the resulting SoC must always be lower than using any $\bm{v}\in\mathcal{V}$. Thus according to Theorem~\ref{th1}, the resulting  dual $\theta_0$ must be no smaller than any dual $ \theta_0(\bm{v})$ using charging status $\bm{v}\in\mathcal{V}$. Vice versa, when using \eqref{es:policy3}, the battery prefer discharging over charging, resulting in a lower bound for $\theta_0$.  Note that in \eqref{es:policy2} and \eqref{es:policy3}, $p^{\pi}_t(x)$ is still a monotonic decreasing function to $x$ since $p^+_t(x) \geq 0$ and $p^-_t(x) \geq 0$ for all $x$, $p^+_t(x)$ is an decreasing function and $p^-_t(x)$ is an increasing function. Hence the SoC series is still monotonic increasing with respect to $x$, and the convergence optimality of Algorithm 1 will not be effected.
\end{proof}
And the result on the dual binding can be extended to bind the primal control results:
\begin{proposition}
\label{pro:prim_b}
   Let $\underline{p}_t = p^{\pi-}_1(\underline{\theta}_{0})$  and $\overline{p}_t = p^{\pi+}_t(\overline{\theta}_{0})$, and $p^*_t$ is the optimal control, then $\underline{p}_t \leq p^*_t \leq \overline{p}_t$ for all $t\in[1,\;\tau]$.
\end{proposition}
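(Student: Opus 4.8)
The plan is to push the dual sandwich of Proposition~\ref{pro:dual_b} through to the primal variables by the same device that turns a multiplier into a control in Proposition~\ref{pro:pi}: monotone dependence of the saturating policy on its scalar argument. Write $\bm{v}^*\in\mathcal{V}$ for the optimal charging schedule (which lies in $\mathcal{V}$, as noted above) and $\theta_0(\bm{v}^*)$ for its Lagrangian dual. With $\bm{v}^*$ held fixed, \eqref{mso}--\eqref{mso:c0b} is convex, so the KKT argument behind Proposition~\ref{pro:pi} and Corollary~\ref{cor:pi} applies to this reduced problem: over the horizon $[1,\tau]$ on which the recovered optimal SoC stays inside $[0,E]$, \eqref{mfv_eq1} forces the multiplier to be constant, $\theta_{t-1}=\theta_0(\bm{v}^*)$, and $p^*_t = p^{\pi,\bm{v}^*}_t\!\big(\theta_0(\bm{v}^*)\big)$, where $p^{\pi,\bm{v}^*}_t$ is the policy $\pi$ with the charge/discharge branch frozen by $\bm{v}^*$: $p^*_t = p^+_t(\cdot)$ on discharge-designated periods ($v^*_t=1$, which forces $p^-_t=0$ without touching \eqref{mfv_eq2}) and $p^*_t = -p^-_t(\cdot)$ on charge-designated periods; on periods with no latent simultaneity all of $p^\pi_t,p^{\pi-}_t,p^{\pi+}_t$ agree with $p^{\pi,\bm{v}^*}_t$.

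The core step is the pointwise comparison
\[
 p^{\pi-}_t(x)\ \le\ p^{\pi,\bm{v}^*}_t(x)\ \le\ p^{\pi+}_t(x)\qquad\forall x\in\mathbb{R},
\]
which I would obtain by a short case split on whether $p^+_t(x)$ and $p^-_t(x)$ vanish, using the definitions \eqref{es:policy2}--\eqref{es:policy3}: on a discharge-designated period $p^{\pi-}_t(x)\le p^+_t(x)$ is immediate and $p^{\pi+}_t(x)=p^+_t(x)$ as soon as $p^+_t(x)>0$; the charge-designated period is the mirror image; the degenerate sub-cases (both components zero, or the designated direction contradicted by the sign of the relaxed control at $\theta_0(\bm{v}^*)$) are excluded by optimality of $\bm{v}^*$, since it designates a direction only where the storage at $\theta_0(\bm{v}^*)$ already wants to move that way, the remaining freedom of $\bm{v}^*$ sitting precisely on periods of genuine simultaneity where both bounding policies straddle either choice.

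To finish, recall from the proof of Proposition~\ref{pro:dual_b} that $p^{\pi-}_t$ and $p^{\pi+}_t$ are non-increasing in their argument and that Proposition~\ref{pro:dual_b} gives $\underline{\theta}_0\le\theta_0(\bm{v}^*)\le\overline{\theta}_0$. Pairing each endpoint of $[\underline{\theta}_0,\overline{\theta}_0]$ with the bounding policy for which monotonicity is co-directed with these dual inequalities, the pointwise comparison evaluated at $x=\theta_0(\bm{v}^*)$ plus one monotonicity step on each side telescopes to $\underline{p}_t\le p^*_t\le\overline{p}_t$ (and the same remarks give $\underline{p}_t\le\overline{p}_t$, so the bracket is non-vacuous). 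Since $\theta_{t-1}=\theta_0(\bm{v}^*)$ on all of $[1,\tau]$ and the argument is pointwise in $t$, the bound holds for every $t\in[1,\tau]$.

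The main obstacle I anticipate is exactly the degenerate-period bookkeeping in the second step: proving rigorously that the optimal schedule $\bm{v}^*$ never designates a direction the relaxed control at $\theta_0(\bm{v}^*)$ would oppose — equivalently, that the freedom inside $\mathcal{V}$ lives only on periods of true simultaneity — since this is the sole place optimality of $\bm{v}^*$ (rather than mere feasibility) enters and the only thing that can break the pointwise comparison. A secondary care point is committing to a single horizon $[1,\tau]$ on which the simulated SoC of each of the $\underline{\theta}_0$-, $\theta_0(\bm{v}^*)$- and $\overline{\theta}_0$-runs stays interior, so that the three multipliers are simultaneously constant there.
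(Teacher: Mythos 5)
Your proposal follows essentially the same route as the paper's own (much terser) proof: represent $p^*_t$ via the saturating policy evaluated at the true dual $\theta_0(\bm{v}^*)$, sandwich it pointwise between the charge-preferring and discharge-preferring policies, invoke Proposition~\ref{pro:dual_b} for $\underline{\theta}_0 \le \theta_0(\bm{v}^*) \le \overline{\theta}_0$, and finish by monotonicity of the policies in their scalar argument. Your version is considerably more careful than the paper's three-line argument --- in particular the explicit pointwise comparison $p^{\pi-}_t(x)\le p^{\pi,\bm{v}^*}_t(x)\le p^{\pi+}_t(x)$ is the correct replacement for the paper's chain $\underline{p}_t \leq -p^-_t(\theta_0) \leq p^+_t(\theta_0) \leq \overline{p}_t$, whose first link is actually false on a pure-discharge period (there $p^{\pi-}_t(\theta_0)=p^+_t(\theta_0)>0=-p^-_t(\theta_0)$).

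There is, however, one substantive gap, and it sits exactly where you wrote ``pairing each endpoint \ldots with the bounding policy for which monotonicity is co-directed'' without checking the direction. All three policies $p^{\pi}_t, p^{\pi-}_t, p^{\pi+}_t$ are \emph{non-increasing} in their argument (as noted in the proof of Proposition~\ref{pro:dual_b}), so from $\underline{\theta}_0 \le \theta_0(\bm{v}^*)$ you get $p^{\pi-}_t(\underline{\theta}_0) \ge p^{\pi-}_t(\theta_0(\bm{v}^*))$, which points the wrong way: it cannot be chained with $p^{\pi-}_t(\theta_0(\bm{v}^*)) \le p^*_t$ to produce a lower bound on $p^*_t$. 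To close the telescope, the lower primal bound must be evaluated at the \emph{upper} dual endpoint, $p^{\pi-}_t(\overline{\theta}_0) \le p^{\pi-}_t(\theta_0(\bm{v}^*)) \le p^*_t$, and symmetrically $\overline{p}_t$ at $\underline{\theta}_0$ --- i.e., either the statement's pairing of endpoints with policies, or the labelling of $\underline{\theta}_0$ and $\overline{\theta}_0$ in Proposition~\ref{pro:dual_b} (whose statement and proof already disagree about which policy yields which bound), must be swapped for the argument to go through. The paper's own proof silently commits the same unjustified step, so this is a flaw you have inherited rather than introduced; but since your stated ``main obstacle'' is the degenerate-period bookkeeping for $\bm{v}^*$, you should be aware that the monotonicity direction, not the bookkeeping, is the step that actually fails as written. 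Your secondary care point --- that a single horizon $[1,\tau]$ must keep all three simulated SoC trajectories interior so the three multipliers are simultaneously constant --- is legitimate and is not addressed by the paper at all.
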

\begin{proof}
First given the dual $\theta_0$, the optimal control $p^*_1$ is either $p^+_t(\theta_0)$ or $-p^-_t(\theta_0)$ as in \eqref{es:policy}, since the battery either charges or discharges. Then according to Proposition~\ref{pro:dual_b}, we know the dual must be within $[\underline{\theta}, \overline{\theta}]$. Then we have
\begin{align}
    \underline{p}_t \leq -p^-_t(\theta_0) \leq p^+_t(\theta_0) \leq \overline{p}_t
\end{align}
which proved this Proposition.
\end{proof}

Hence Proposition~\ref{pro:dual_b} and Proposition~\ref{pro:prim_b} provide bounds for the primal and dual result of the non-convex battery control problem using the proposed algorithm that has worst-case linear time complexity.

\section{Numerical Simulation}

We use randomly generated data sets to compare the proposed algorithm to solving Problem~\ref{mso} with different objectives using Gurobi~\cite{gurobi} (model generated using CVX~\cite{cvx}). All simulations are performed in Matlab~\cite{matlab} on a 2.3 GHz machine with 16GB memory.The storage parameter is set as $P = 1$~p.u., $E = 4$~p.u., $e_0 = 2$~p.u., $\eta = 0.92$, and the terminal cost function is set to $C_T(e_T)= (E-e_T)^2/2$. The accuracy of the search algorithm is set to $\epsilon = 10^{-3}$.

\subsection{Piece-wise linear objectives}

\begin{table}[!t]
    \centering
    \caption{Piecewise-linear simulation results.}
    \begin{tabular}{l r r r r r r}
        \hline
        \hline\Tstrut
          & \multicolumn{3}{c}{ CVX+Gurobi }  & \multicolumn{3}{c}{ Proposed} \\
          Trials & $\theta_0$ & $p^*_1$ & cpu [ms] & $\theta_0$ & $p^*_1$ & cpu [ms]
        \\
        \hline\Bstrut\Tstrut
        & \multicolumn{6}{c}{ $T=10$, $J=100$ } \\
        \hline\Bstrut\Tstrut
        1 & 17.53 & 0.6 & 408.8 & 17.53 & 0.6 & 0.1 \\
        2 & 16.82 & 0.0 & 278.1 & 16.82 & 0.0 & 0.1  \\
        3 & 14.59 & 0.3 & 274.5 & 14.59 & 0.3 & 0.1   \\
        4 & 14.21 & 0.1 & 281.7 & 14.21 & 0.1 & 0.1  \\
        5 & 7.27 & -0.5 & 283.7 & 7.27 & -0.5 & 0.1  \\
                \hline\Bstrut\Tstrut
        & \multicolumn{6}{c}{ $T=10$, $J=1000$ } \\
        \hline\Bstrut\Tstrut
        6 & 18.67 & 0.0 & 1851.5 & 18.66 & 0.0 & 0.1 \\
        7 & 20.28 & 0.8 & 1791.5 & 20.28 & 0.8 & 0.2 \\
        8 & 19.50 & 0.0 & 1788.2 & 19.50 & 0.0 & 0.1   \\
        9 & 17.44 & -0.7 & 1868.4 & 17.44 & -0.7 & 0.2  \\
        10 & 19.00 & 0.0 & 1868.7 & 19.00 & 0.0 & 0.2  \\
                \hline\Bstrut\Tstrut
        & \multicolumn{6}{c}{ $T=100$, $J=1000$ } \\
        \hline\Bstrut\Tstrut
        11 & 19.09 & 0.6 & 18497.7 & 19.09 & 0.6 & 0.1 \\
        12 & 19.95 & 0.8 & 19108.2 & 19.95 & 0.8 & 0.2  \\
        13 & 19.44 & 0.0 & 18786.1 & 19.44 & 0.0 & 0.1   \\
        14 & 18.91 & -1.0 & 19263.2 & 18.91 & -1.0 & 0.2  \\
        15 & 19.55 & 0.0 & 19080.4 & 19.55 & 0.0 & 0.2 \\
         \hline
    \end{tabular}
    \label{tab:sim4}
\end{table}

\begin{figure}[!htb]
    \centering
    \includegraphics[trim={10mm 0 10mm 3mm},clip, width = .95\columnwidth]{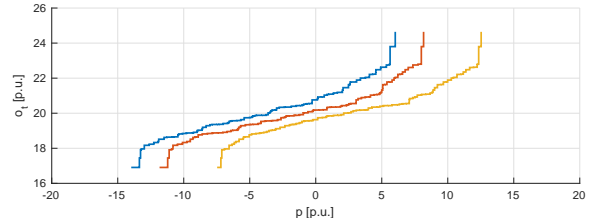}
    \caption{Three examples of generated piecewise linear cost curves plotted as derivative of the objective function $o_t$.}
    \label{fig:pwl_example}
\end{figure}

In this section the proposed algorithm is compared with Gurobi using piecewise linear objective function inspired by the supply curves in power system economic dispatches~\cite{kirschen2018fundamentals}. The derivative of the objective function $o_t$ is written as
\begin{align}
    o_t(p) = c_j \text{ if } q_{j-1} \leq p < q_j
\end{align}
where $j\in [1,J]$ is the piecewise segment index, $J$ is the number of segments, $c_j$ is the marginal cost (derivative) of the system when $p$ is between quantities $q_{j-1}$ and $q_j$, and the objective is convex if $c_i \leq c_j$ and $q_i \leq q_j$ for all $i<j$, $i,j\in [1,J]$. Some examples of the generated cost curve are plotted in Fig.~\ref{fig:pwl_example}.
\begin{figure}[!t]
    \centering
    \subfloat[]{
        \includegraphics[trim={10mm 0 10mm 3mm},clip, width = .95\columnwidth]{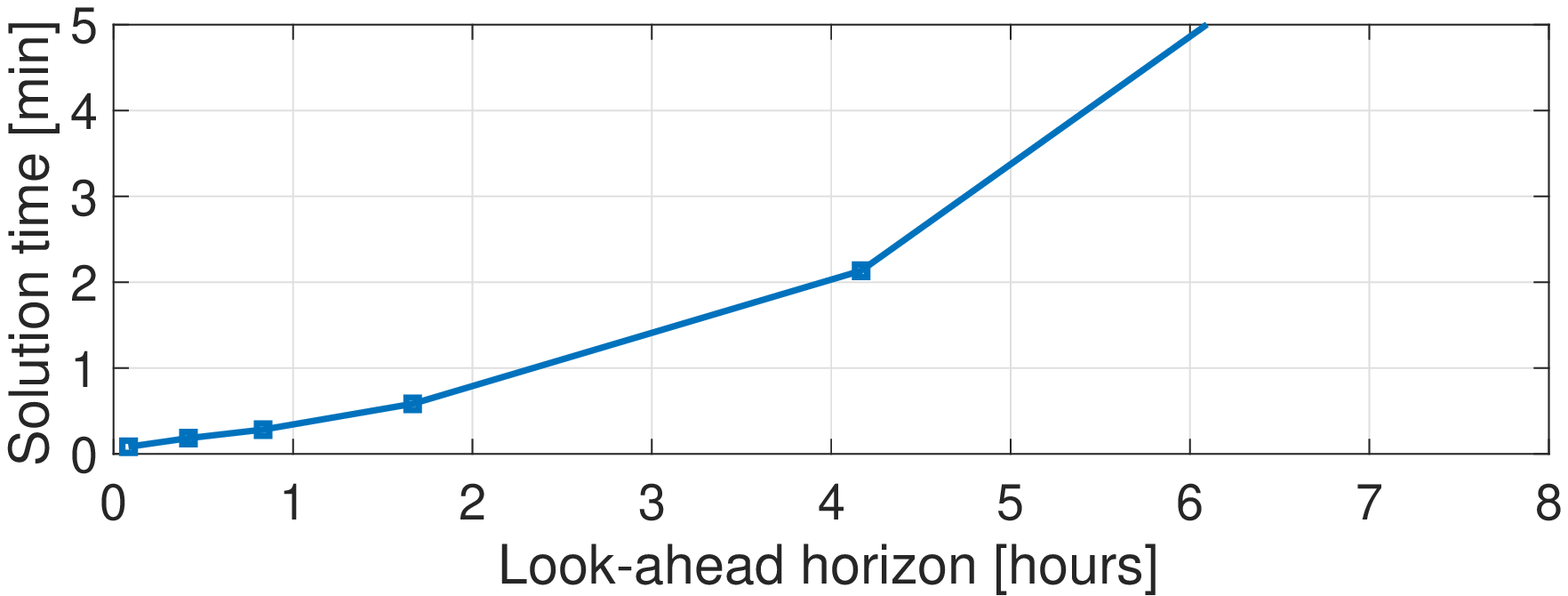}
        \label{fig:comp1}
    }
    \\
    \subfloat[]{
        \includegraphics[trim={10mm 0 10mm 3mm},clip, width = .95\columnwidth]{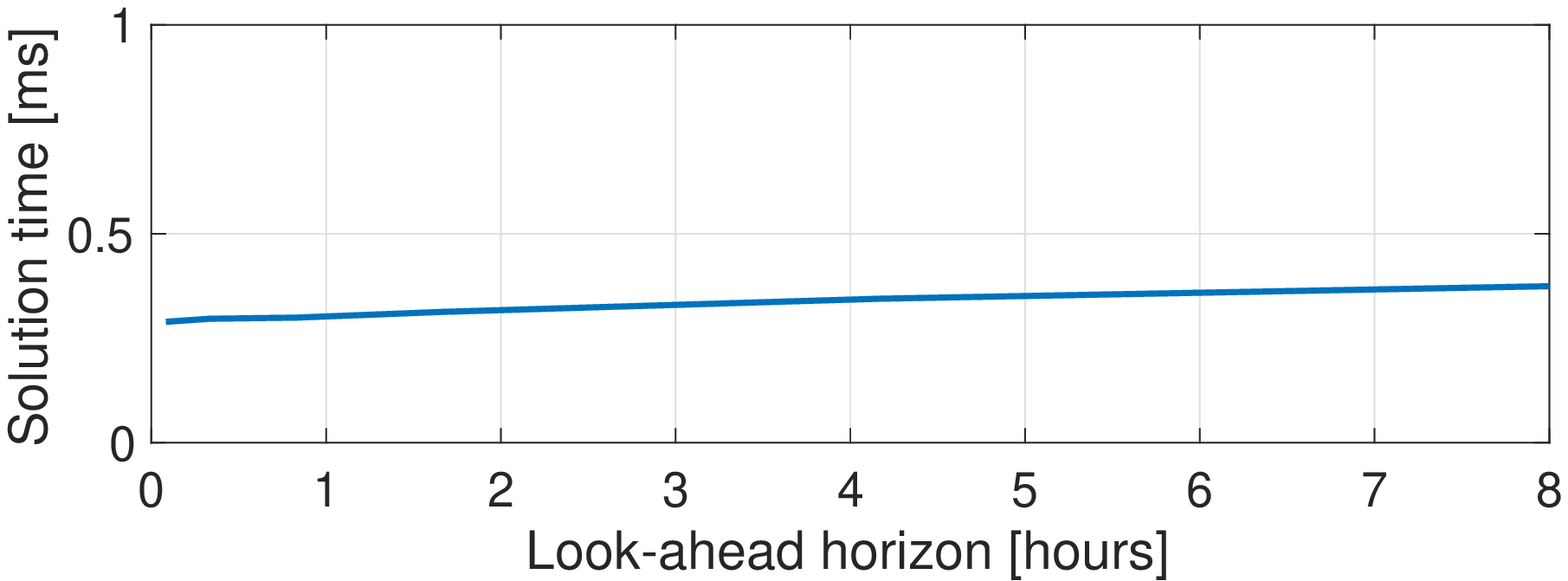}
        \label{fig:comp2}
    }
    \caption{\footnotesize Average computation time for solving look-ahead economic dispatch with 5000 cost segments per five minutes using (a) Gurobi and (b) the proposed algorithm. Note that (a) is plotted in minutes while (b) is plotted in milliseconds. In (a), when the look-ahead horizon is beyond 6 hours, the problem takes more than five minutes to solve, which is infeasible for real-time economic dispatch that must be finished within five minutes.}
    \label{fig:comp}
\end{figure}
Similar to the quadratic results, we test the proposed algorithm and Gurobi using different settings and the results are demonstrated in Table~\ref{tab:sim4}, where trials 1--5 have 10 time steps and 100 cost segments $T=10$, $J=100$, trials 6--10 have 10 time steps and 1,000 cost segments $T=10$, $J=1000$, trials 11--15 have 100 time steps and 1,000 cost segments $T=100$, $J=1000$. The result shows the proposed algorithm obtains the same results in all trials compared to Gurobi, while being hundreds or even thousands of times faster.
In particular, in trials 11-15 Gurobi needs around 18 seconds to complete the computation, while the proposed algorithm finishes below 1ms.

In Fig.~\ref{fig:comp}, we further test the computation speed of both methods in solving  look-ahead economic dispatches over the size of realistic power systems with 5000 cost segments per five minute dispatch interval. The result shows that the computation time of Gurobi increases significantly with respect to the look-ahead horizon, and in particular at the 6 hour look-ahead, the problem takes more than 5 minutes to solve which is not feasible since the economic dispatch must be calculated within 5 minutes. In contrast, with our proposed algorithm, the average solution speed is below 0.5 milliseconds for look-ahead horizons up to 8 hours, providing a speed-up up to 100,000 times.

\subsection{Negative Lagrangian dual example}

\begin{table}[!t]
    \centering
    \caption{Negative Lagrangian dual results.}
    \begin{tabular}{l r r r r r r}
        \hline
        \hline\Tstrut
          & \multicolumn{3}{c}{ Dual results }  & \multicolumn{3}{c}{ Control results } \\
          Trials & $\underline{\theta}_0$ & ${\theta}^*_0$ & $\overline{\theta}_0$  & $\underline{p}_1$ & $p^*_1$ & $\overline{p}_1$
        \\
        \hline\Bstrut\Tstrut
        1 & -10.7 & -10.6 & -10.2 & -0.14 & 0.10 & 0.11\\
        2 & -12.9 & -12.5 & -11.8 & 0.49 & 0.53 & 0.56\\
        2 & -8.5 & -8.2 & -8.2 & -0.17 & -0.17 & -0.16\\
        4 & -4.4 & -4.4 & -4.0 & -1.00 & -1.00 & -1.00 \\
        5 & -15.0 & -14.6 & -14.5 & 0.89 & 0.91 & 0.97 \\
        \hline
    \end{tabular}
    \label{tab:sim1}
\end{table}

We consider the following quadratic objective function
\begin{align}
    O_t = \frac{\alpha_t}{2} (\beta_t - p)^2
\end{align}
where $\alpha_t$ are randomly generated between $[0, 10]$, and $\beta_t$ between $[-10,0]$. Recall that negative sign is for charging the battery, hence this is a generation tracking problem where the storage wishes to absorb as much energy as possible. This will result in negative dual prices for the storage and simultaneous charging and discharging will occur if not constrained. Table~\ref{tab:sim1} shows the simulation result for five trails including upper and lower bounds on the dual and control, where ${\theta}^*_0$ and $p^*_1$ is calculated using Gurobi with mixed-integer quadratic programming under default settings, solving \eqref{mso} using the integer constraint \eqref{mso:c0b} for enforcing constraint \eqref{mso:c0}. In all test trails, the primal and dual result fall within the calculated range. In terms of computation speed, the proposed method all solves less than 1 millisecond, while the benchmark method using Gurobi may need up to several minutes to solve the problem depending on the problem size (thousands of steps), due to solving a mixed-integer quadratic programming problem.


\section{Conclusion}
This paper proposed a novel algorithm for solving look-ahead control for energy storage. The numerical results illustrate that the algorithm provides computation speed in milliseconds for controlling a single energy storage device over an extended planning period. In future research, we plan on expanding this method to controlling multiple energy storage devices subject to network constraints. Moreover, using the generalized terminal state function we plan on incorporating this algorithm into scenario-based stochastic programming or dynamic programming. In addition, our results connects the optimal control with the Lagrangian multiplier associated with the state-of-charge constraint, which we will further explore to provide key insights into designing future electricity pricing and distributed control schemes.

\bibliographystyle{IEEEtran}	
\bibliography{IEEEabrv,literature}		

\appendix
\section{Proof of Theorem 1}
We start by showing that when moving to the next control step, the value of the Lagrangian $\theta_t$ only changes after $e^*_t$ is reaching the upper or lower SoC bound, or more specifically:
    \begin{subequations}\label{eq:theta}
    \begin{align}
        \theta_{t-1} = \theta_t\quad &\text{if $\overline{\nu}_{\tau} = \underline{\nu}_{\tau} = 0$}\\
        \theta_{t-1} < \theta_t\quad &\text{if $\overline{\nu}_{\tau} > 0$}\\
        \theta_{t-1} > \theta_t\quad &\text{if $\underline{\nu}_{\tau} > 0$}
    \end{align}
    \end{subequations}
Hence, it is trivial to see that if $0 \leq \sigma_{\tau}(\theta_0) \leq E$ $\forall$ $\tau\in[1,t]$ then $\theta_{t-1} = \theta_0$, leading $p\up{\pi}_t(\theta_{t-1}) = p\up{\pi}_t(\theta_{0}) = p^*_t$ according to Proposition~\ref{pro:pi}.

We will do the proof separately for three possible cases of $e^*_t$: 1) $e^*_t$ never reaches upper or lower bound with all $\underline{\nu}_t$ and $\overline{\nu}_t$ equal to zero; 2) $e^*_t$ reached upper bound first; 3) $e^*_t$ reached lower bound first. These three cases are illustrated in Fig.~\ref{fig:th1}.

\subsubsection{$0<e^*_t < E$ $\forall$ $t\in [1,T]$}
This cover the cases when $e^*_t$ never reached the upper or lower bound, which from \eqref{eq:theta} we know $\theta_0 = \theta_1 = \dotsc = \theta_T$, hence  $p^*_t = p\up{\pi}_t(\theta_0)$ and $e^*_t = \sigma_t(\theta_0)$ for all $t\in[1,T]$, and in particular for $t=T$ we have
\begin{align}\label{eq:th1_T}
\theta_0 = \theta_T = -c_T(e^*_T) = -c_T(\sigma_T(\theta_0))
\end{align}
according to \eqref{mfv_eq4} and the aforementioned result.
Since $O_t$ and $c_T$ are convex, $o_t$, $c_T$, and $\varphi_t$ (inverse of $o_t$) are monotonic increasing functions, it follows
\begin{subequations}\label{eq:th1a}
\begin{align}
    x &\geq \theta_0 \\
    p\up{\pi}_t(x) &\leq p\up{\pi}_t(\theta_0) : \text{convexity, see \eqref{pro:pi}} \\
    \sigma_t(x) &\geq \sigma_t(\theta_0) : \text{see \eqref{eq:th_soc}} \\
    c_T(\sigma_t(x)) &\geq c_T(\sigma_t(\theta_0)) : \text{$c_T(\cdot)$ both sides } \\
    c_T(\sigma_T(x)) &\geq -\theta_0 : \eqref{eq:th1_T} \\
    c_T(\sigma_T(x)) &\geq -x :  -\theta_0 \geq -x\\
    x &\geq -c_T(\sigma_T(x))
\end{align}
\end{subequations}
meaning if $x \geq -c_T(\sigma_T(x))$ then $x \geq \theta_0 $, thus we proved condition 1-b in the Theorem. Similarly we can prove condition 2-b starting with $x\leq \theta_0$. Also it is trivial to see that if any $\sigma_t(x)$ goes above $E$ then in this case we know $\sigma_t(x) > \sigma_t(\theta_0)$ hence $x > \theta_0$, and vice versa for $\sigma_t(x)$ goes below 0, hence we proved condition 1-a and 2-a. It is also trivial to see that if $x = c_T(\sigma_T(x))$, then the KKT condition is satisfied and $x=\theta_0$, which proves condition 3. Thus all conditions in this theorem are proved for this this case.

\subsubsection{$\exists \tau\in[1,T]$ s.t.  $\overline{\nu}_{\tau} > 0$ and $0<e^*_t < E$ $\forall$ $t\in [1,\tau)$} This covers the cases when $e^*_t$ reached the upper bound first.
An example of this case in shown in Fig.~\ref{fig:th1_example}.
\begin{figure}[!htb]
    \centering
    \includegraphics[trim={15mm 0mm 10mm 5mm},clip, width = .95\columnwidth]{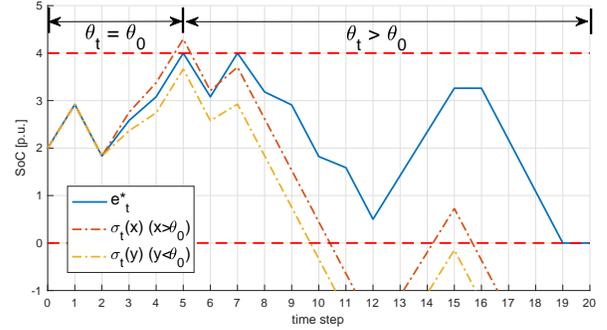}
    \caption{An explanatory example when $e^*_t$ reached upper bound first, the upper and lower SoC bounds are plotted wit red dash. Besides $e^*_t$ plotted with the blue line, two comparative SoC series $\sigma_t(x)$ and $\sigma_t(y)$ are included in this figure with $y<\theta_0<x$. It is clearly from the figure that $\sigma_t(x)$ reached upper bound first indicating $x \geq \theta_0$, while $\sigma_t(y)$ reached lower bound first such that $y \leq \theta_0$, as stated in Theorem~\ref{th1}.}
    \label{fig:th1_example}
\end{figure}
Now from \eqref{eq:th1a} we can conclude if $\exists \tau\in[1,T]$ s.t.  $\overline{\nu}_{\tau} > 0$ and $0<e^*_t < E$ $\forall$ $t\in [1,\tau)$, then the same condition must be satisfied for all $x\geq \theta_0$, hence condition 1-a is proved. And from \eqref{eq:theta} we know after reaching the upper bound, all the following $\theta$ values will be greater than $\theta_0$ until $e^*_t$ reaches the lower bound or till the end of the operation $T$ (i.e., $e^*_t$ never reaches the lower bound). Without loss of generality, let $\gamma$ be the time that $e^*_t$ first reaches the lower bound or the end of the operation period, i.e., $e^*_t > 0$ $\forall$ $t\in [1,\gamma)$, it follows
\begin{subequations}
\begin{align}
    x &\leq \theta_0 \\
    x &\leq \theta_t\; \forall\; t\in [1,\gamma)\\
    p\up{\pi}_t(x) &\geq p\up{\pi}_t(\theta_{t-1}) : \text{see \eqref{pro:pi}} \\
    p\up{\pi}_t(x) &\geq p^*_t : \text{Proposition~\ref{pro:pi}} \\
    \sigma_t(x) &\leq e^*_t
\end{align}
\end{subequations}
which means $\sigma_t(x)$ either will go below $0$ (condition 2-a) or $\sigma_T(x)\leq e^*_T$ which leads to $x \leq c_T(\sigma_T(x))$ (condition 2-b) according to \eqref{eq:th1a}, hence we proved this theorem for this case.

\subsubsection{$\exists \tau\in[1,T]$ s.t.  $\underline{\nu}_{\tau} > 0$ and $0<e^*_t < E$ $\forall$ $t\in [1,\tau)$} This covers the case when $e^*_t$ reaches the lower bound first. 
This is a mirror proof to the previous case while inverting the upper and lower bound logic, hence this proof is omitted.

\end{document}